\title{Tightness of Banach spaces and Baire category}
\author{Valentin Ferenczi  and Gilles Godefroy}
\address{Instituto de Matem\'atica e Estat\'istica \\
 Universidade de S\~ao Paulo \\
rua do Mat\~ao 1010 \\
Cidade Universit\'aria \\
05508-90 S\~ao Paulo, SP \\
Brazil.}
\email{ferenczi@ime.usp.br}
\address{Institut de Math\'ematiques de Jussieu \\
4 place Jussieu\\
75252 Paris Cedex 05\\
France}
\email{godefroy@math.jussieu.fr}
\urladdr{http://www.math.jussieu.fr/~godefroy}
\date{}
\newcommand {\N}{\mathbb N}
\newcommand {\Q}{\mathbb Q}
\newcommand{\embed}{\sqsubseteq}
\newcommand {\del}{ \; \big| \;}
\newtheorem{thm}{Theorem}[section]
\newtheorem{cor}[thm]{Corollary}
\newtheorem{lemme}[thm]{Lemma}
\newtheorem{prop} [thm] {Proposition}
\newtheorem{rem}[thm]{Remark}
\newtheorem{prob}[thm]{Problem}
\newtheorem{ex}[thm]{Example}
\thanks{The authors wish to thank Christian Rosendal for many useful conversations.}
\begin{document}
\subjclass[2000]{46B20, 54E52}

\keywords{Gowers' program, Banach space dichotomies, tight spaces, minimal spaces, Baire category}

\begin{abstract} We prove several dichotomies on linear embeddings between Banach spaces. Given an arbitrary Banach space $X$ with a basis, we show that the relations of isomorphism and bi-embedding are meager or co-meager on the Polish set of block-subspaces of $X$. We relate this result
with tightness and minimality of Banach spaces. Examples and open questions are provided.
\end{abstract}
\maketitle

\tableofcontents

\section{Introduction}
W.T. Gowers' fundamental results in geometry of Banach spaces \cite{Go,Go1} opened the way to a loose classification of Banach spaces up to subspaces, known as Gowers'  program (see \cite{FeR2}). We focus in this note on a specific question: how many subspaces - up to linear isomorphism - does a non-hilbertian Banach space contain? More precisely, this note gathers several observations in the spirit of previous work by C. Rosendal and the first-named author (\cite{Fe,FeR,FeR1,FeR2}). which were not spelled out before.
These remarks relate in particular the notion of tightness (from \cite{FeR2}) to Baire category arguments.

Our main results are dichotomies: Theorem \ref{2.1} is an embedding dichotomy into a Banach space with a basis. Theorem \ref{3.1} states that the relations of linear isomorphism and of bi-embeddability are meager or co-meager on the set $b(X)$ of block-subspaces of a space $X$ with a basis. Several examples are given and commented open questions conclude the note.

\

We use the classical notation and terminology for Banach spaces, as may be found in \cite{LT}. Our reference for topology and descriptive set theory results is \cite{Ke}. 

Specific pieces of notation are needed for block-bases and subspaces, and for these we follow \cite{Fe} and \cite{FeR3}. We differ, however, on the following: what is denoted $bb_d(X)$ there is denoted here $b(X)$. 

We recall what this notation means. Let $X$ is a Banach space equipped with a basis $(e_n)$. Given a field $Q$ of scalars, denote by $c_{00}(Q)$ the $Q$-vector space generated by the basis $(e_n)$.   We fix   a countable field $Q$ containing the set $\Q$ of rationals (or $\Q+i\Q$ in the complex case), and the norm of any
vector in $c_{00}(Q)$ - such a $Q$ is easily constructed by induction. Then let 
$Q_0$ be the set of normalized vectors of $c_{00}(Q)$.

We equip the countable set $Q_0$ with the discrete topology. The set $b(X)$ consists of all block-bases made of vectors in $Q_0$, while $b^{<\omega}(X)$ is the set of finite block-bases made of such vectors. The set $b(X)$ is a closed subset of $Q_0^{\omega}$, and thus it is a Polish space. 
Although the set $b(X)$ depends not only on $X$, but also on the choice of the basis $(e_n)$, there will be no ambiguity from this notation, since we shall always work with a fixed basis $(e_n)$ of $X$.

We recall that the support of a vector $x=\sum_n x_n e_n$ of $Q_0$ is the set $${\rm supp}\ x=\{n: x_n \neq 0\},$$ while the range of $x$ is the interval of integers $${\rm range}\ x=
[\min({\rm supp}\ x), \max({\rm supp}\ x)].$$

\section{Some topological lemmas}

We recall in this section some well-known results on Baire dichotomies. Our first lemma is  called the first topological 0-1 law in \cite{Ke} (Th. 8.46). It appears in \cite{G}, Lemma 2, but was certainly known earlier.

\begin{lemme}\label{1.1} Let $P$ be a Polish space, and $G$ be a group of homeomorphisms of $P$ such that for all $U, V$ non-empty open sets in $P$, there is $g \in G$ such that $g(U) \cap V \neq \emptyset$. Let $A \subset P$ with the Baire Property such that $g(A)=A$ for all $g \in G$. Then $A$ is meager or comeager.
\end{lemme}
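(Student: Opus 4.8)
The plan is to assume that $A$ is not meager and prove that it is then comeager. The starting point is the characterization of the Baire property: since $A$ has the Baire property, we may write $A = U_0 \triangle M$ with $U_0$ open and $M$ meager. If $A$ is not meager then $U_0 \neq \emptyset$, and from $A \triangle U_0 = M$ one reads off that $A$ is comeager in $U_0$, that is, $U_0 \setminus A$ is meager. So non-meagerness already forces $A$ to be comeager in some nonempty open set.

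The key object is the union $U^*$ of all open sets $V$ in which $A$ is comeager (i.e. $V \setminus A$ is meager). I would first check that $A$ is itself comeager in $U^*$, so that $U^*$ is the \emph{largest} open set with this property. This is where second countability of the Polish space $P$ enters: cover $U^*$ by countably many of the open sets $V$ from the defining family, and use that a countable union of meager sets is meager to conclude $U^* \setminus A$ is meager. By the previous paragraph $U^* \neq \emptyset$.

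Next I would bring in the two hypotheses on $G$. Invariance and the homeomorphism property give $G$-invariance of $U^*$: if $A$ is comeager in $V$ then, applying the homeomorphism $g$ and using $g(A)=A$, the set $A$ is comeager in $g(V)$, whence $g(V) \subseteq U^*$; taking $V = U^*$ yields $g(U^*) \subseteq U^*$, and applying this to $g^{-1}$ as well gives $g(U^*)=U^*$ for every $g \in G$. The topological transitivity hypothesis then forces $U^*$ to be dense: if $\overline{U^*} \neq P$, the nonempty open set $V = P \setminus \overline{U^*}$ would satisfy $g(U^*) \cap V = U^* \cap V = \emptyset$ for every $g$, contradicting the assumption that some $g \in G$ has $g(U^*) \cap V \neq \emptyset$.

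Finally, a dense open set is comeager, so $P \setminus U^*$ is meager; combined with the fact that $U^* \setminus A$ is meager, the inclusion $P \setminus A \subseteq (P \setminus U^*) \cup (U^* \setminus A)$ shows $P \setminus A$ is meager, i.e. $A$ is comeager, as desired. The only genuinely delicate point is the existence of the largest open set $U^*$ in which $A$ is comeager, which rests on the separability of $P$; everything else is a direct manipulation of the two hypotheses on $G$.
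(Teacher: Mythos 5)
Your proof is correct, but it follows a genuinely different route from the paper's. The paper argues by contradiction in three lines: if both $A$ and $B=P\setminus A$ were non-meager, the Baire property would give non-empty open sets $U$ and $V$ with $U\cap B$ and $V\cap A$ meager; choosing $g$ with $W:=g(U)\cap V\neq\emptyset$ and using $g(U\cap B)=g(U)\cap B$, the non-empty open set $W$ would be covered by the two meager sets $W\cap A$ and $W\cap B$, contradicting the Baire category theorem. You instead run the classical ``localization'' argument: you build the largest open set $U^*$ in which $A$ is comeager (via the Banach category theorem, which you correctly ground in second countability), show it is non-empty and $G$-invariant, and use topological transitivity to force it to be dense. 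Both arguments are sound. The paper's version is shorter and needs only one application of the transitivity hypothesis to a single pair $(U,V)$, at the price of invoking that non-empty open subsets of a Polish space are non-meager. Your version trades that for the construction of a canonical invariant dense open set; it is a bit longer but more structural, and it isolates cleanly where each hypothesis (Baire property, invariance, transitivity, separability) is used. All the individual steps you sketch -- $U_0\neq\emptyset$ when $A$ is non-meager, $g(V)\setminus A=g(V\setminus A)$ being meager, $g(U^*)=U^*$, and the density of $U^*$ -- check out.
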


\begin{proof} Let $B=P \setminus A$. If $A$ and $B$ are both non-meager, then there exist two non-empty open sets $U$ and $V$ such that $U \cap B$ and $V \cap A$ are both meager. Let $g \in G$ be such that the open set $W:=g(U) \cap V$ is non-empty. Since $g(U) \cap B=g(U \cap B)$, we have that $W \cap B$ and $W \cap A$ are both meager, and this is a contradiction.
\end{proof}

\begin{ex}\label{1.2} The relations $E_0$ and $E^{\prime}_0$. \end{ex} We see the Cantor set $2^{\omega}$ as the set of subsets of $\omega$, the set $2^{<\omega}$ as the set of finite subsets of $\omega$, and we define on $2^{\omega}$ the following relations.
\begin{enumerate}
\item $u E_0 v$ if there is $n \geq 0$ such that 
$$u \cap [n,+\infty)=v \cap [n, +\infty),$$

\item $u E_0^{\prime} v$ if there is $n \geq 0$ such that
$$u \cap [n,+\infty)=v \cap [n, +\infty)$$ and
$$|u \cap [0,n-1]|=|v \cap [0,n-1]|.$$
\end{enumerate}
Then the equivalence classes for $E_0$ or $E_0^{\prime}$ are orbits of groups of homeomorphisms, namely, for $E_0$,
$$G_0=\{(u \Delta .), u \in 2^{\omega}\},$$
and for $E_0^{\prime}$, the group $G_0^{\prime}$ of permutations of $\omega$ with finite support. Therefore any subset of $2^{\omega}$ with the Baire property which is $E_0-$, or (merely) $E_0^{\prime}$-saturated, is meager or comeager.

\

Our second lemma is a standard compactness argument ( see \cite{FeR}, Lemma 7).

\begin{lemme}\label{1.3} Let $A$ be a subset of $2^{\omega}$. The following assertions are equivalent:
\begin{enumerate}
\item $A$ is comeager,
\item there is a sequence $I_0<I_1<I_2<\cdots$ of successive subsets of $\omega$, and $a_n \subset I_n$, such that for any $u \in 2^{\omega}$, if the set $\{n: u \cap I_n=a_n\}$ is infinite, then $u \in A$.
\end{enumerate}
\end{lemme}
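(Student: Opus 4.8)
The plan is to prove the two implications separately, the direction $(2)\Rightarrow(1)$ being a soft exhaustion argument and $(1)\Rightarrow(2)$ requiring an inductive construction whose single delicate point is a localization lemma. Throughout, for a finite set $I\subset\omega$ and $a\subset I$ I write $[a;I]=\{u\in 2^{\omega}:u\cap I=a\}$; this is a basic clopen set, and the family of all $[a;I]$ (with $I\subset\omega$ finite and $a\subset I$) is a basis of the topology. Given data as in $(2)$, I set
$$A'=\Big\{u\in 2^{\omega}:\{n:u\cap I_n=a_n\}\text{ is infinite}\Big\}=\bigcap_{N}\bigcup_{n\geq N}[a_n;I_n],$$
so that condition $(2)$ is precisely the inclusion $A'\subset A$.

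For $(2)\Rightarrow(1)$ I would show that $A'$ is comeager; since $A\supset A'$, this forces $A$ comeager. Each $[a_n;I_n]$ is clopen, so each $G_N=\bigcup_{n\geq N}[a_n;I_n]$ is open, and I claim it is dense. Indeed, as the $I_n$ are successive and nonempty their minima tend to $+\infty$, so given any basic open set $[s;F]$ one can pick $n\geq N$ with $I_n\cap F=\emptyset$; then the conditions defining $[s;F]$ and $[a_n;I_n]$ are compatible, whence $[s;F]\cap[a_n;I_n]\neq\emptyset$. Thus $A'=\bigcap_N G_N$ is a dense $G_\delta$, i.e. comeager.

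For $(1)\Rightarrow(2)$ I would first write $A\supset\bigcap_k U_k$ with the $U_k$ open dense, and after replacing $U_k$ by $\bigcap_{j\leq k}U_j$ I may assume $U_0\supset U_1\supset\cdots$. The goal is to build successive finite sets $I_0<I_1<\cdots$ and $a_n\subset I_n$ with
$$[a_n;I_n]\subset U_n \qquad(\ast).$$
Granting $(\ast)$ the verification is immediate: if $\{n:u\cap I_n=a_n\}$ is infinite then, for every $k$, choosing such an $n\geq k$ gives $u\in[a_n;I_n]\subset U_n\subset U_k$, so $u\in\bigcap_k U_k\subset A$, which is exactly $(2)$. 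The sets are produced by induction: at stage $n$, with $m=\max I_{n-1}$ already fixed (and $m=-1$ if $n=0$), I invoke the localization lemma below to obtain $I_n\subset(m,+\infty)$ and $a_n\subset I_n$ satisfying $(\ast)$, the constraint $I_n\subset(m,+\infty)$ guaranteeing successiveness.

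The one substantial point — and the place where compactness enters — is the localization lemma: \emph{for every open dense $U\subset 2^{\omega}$ and every $m$ there exist a finite $I\subset(m,+\infty)$ and $a\subset I$ with $[a;I]\subset U$.} Here I would use the decomposition $2^{\omega}\cong 2^{[0,m]}\times 2^{(m,+\infty)}$ and, for each of the finitely many patterns $x\in 2^{[0,m]}$, consider the slice $U_x=\{y\in 2^{(m,+\infty)}:(x,y)\in U\}$. Each $U_x$ is open, and density of $U$ in $2^{\omega}$ yields density of $U_x$ in $2^{(m,+\infty)}$ (a cylinder there, together with the fixed pattern $x$, is a nonempty open subset of $2^{\omega}$, which $U$ must meet). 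Since $2^{[0,m]}$ is finite, $V=\bigcap_{x\in 2^{[0,m]}}U_x$ is a finite intersection of dense open sets, hence itself dense open, so $V$ contains a cylinder $\{y:y\cap I=a\}$; as $y\in V$ means $(x,y)\in U$ for all $x$, this gives $[a;I]=2^{[0,m]}\times\{y:y\cap I=a\}\subset U$. This finiteness of $2^{[0,m]}$ is the only role played by compactness, and I expect it to be the crux of the argument; everything else is routine Baire-category bookkeeping.
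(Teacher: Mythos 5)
Your proposal is correct and follows essentially the same route as the paper: the reverse implication via the dense open sets $\bigcup_{n\geq N}[a_n;I_n]$, and the forward implication by an inductive construction of successive cylinders each contained in the $n$-th dense open set, with the finiteness of $2^{[0,m]}$ playing exactly the role the paper describes as ``the trivial fact that a set with two points is compact.'' Your explicit isolation of the localization lemma just spells out the step the paper dismisses as an easy induction argument.
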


\begin{proof} For the reverse implication, just note that
$$O_n=\{u \in 2^{\omega} \del \exists k \geq n, u \cap I_k=a_k\}$$
is a dense open set of $2^{\omega}$ for any $n \geq 1$, and that
$$\cap_{n \geq 1}O_n \subset A.$$

For the direct implication, assuming $A$ is comeager, we write
$$2^{\omega} \setminus A \subset \cup_{n \geq 0}F_n,$$ where each $F_n$ is closed with empty interior. The "compactness" we use is actually the trivial fact that a set with two points is compact. An easy induction
argument 
provides $I_n$ and $a_n$ such that
$$u \cap I_n=a_n \Rightarrow u \notin \cup_{i<n}F_i.$$
If $u \in F_k$, then $u \cap I_n \neq a_n$ for all $n>k$, and the conclusion follows.
\end{proof}

 It results from the proof that we can assume without loss of generality that the $I_k$'s constitute a partition of $\omega$ into intervals.

\begin{cor}\label{1.5} Let $A$ be a subset of $\omega$ such that:
$$u \in A, u \subset v \Rightarrow v \in A.$$
Then
\begin{itemize}
\item[(a)] $A$ is meager if and only if there exist $I_0<I_1<I_2<\cdots$ such that
$$u \in A \Rightarrow \{n; u \cap I_n=\emptyset\} {\rm\ is\ finite}.$$
\item[(b)] $A$ is comeager if and only if there exist $I_0<I_1<I_2<\cdots$ such that
if $u$ contains infinitely many $I_n$'s, then $u \in A$.
\end{itemize}
\end{cor}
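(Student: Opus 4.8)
The plan is to derive both parts from Lemma \ref{1.3}, treating $A$ as a subset of $2^\omega$ which is upward closed in the sense that $u\in A$ and $u\subseteq v$ force $v\in A$. The one subtlety is that Lemma \ref{1.3} produces an \emph{exact} matching condition $u\cap I_n=a_n$, whereas the corollary asks only for the coarser conditions $u\cap I_n=\emptyset$ in part (a) and $I_n\subseteq u$ in part (b); bridging this gap is precisely where the monotonicity of $A$ enters, and it is the only genuine step.

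First I would prove (b). The reverse implication is immediate: the hypothesis that $u$ containing infinitely many $I_n$'s implies $u\in A$ is literally condition (2) of Lemma \ref{1.3} with $a_n=I_n$, so $A$ is comeager. For the direct implication, assume $A$ comeager and let $(I_n)$ and $(a_n)$ with $a_n\subseteq I_n$ be as in Lemma \ref{1.3}. Given $u$ containing infinitely many $I_n$'s, set $S=\{n: I_n\subseteq u\}$ and $w=\bigcup_{n\in S}a_n$. Since the $I_n$ are successive, hence pairwise disjoint, we get $w\cap I_n=a_n$ for every $n\in S$, so $\{n: w\cap I_n=a_n\}$ is infinite and $w\in A$. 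As $a_n\subseteq I_n\subseteq u$ for $n\in S$ we have $w\subseteq u$, and upward closure gives $u\in A$, as required.

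Then I would obtain (a) by passing to the complement $B=2^\omega\setminus A$, which is downward closed: if $v\in B$ and $u\subseteq v$ then $u\in B$, for otherwise $u\in A$ would force $v\in A$. Now $A$ is meager iff $B$ is comeager, and the stated condition $u\in A\Rightarrow\{n:u\cap I_n=\emptyset\}$ finite is the contrapositive of the implication that $\{n:u\cap I_n=\emptyset\}$ infinite forces $u\in B$. For the reverse implication this last statement is condition (2) of Lemma \ref{1.3} applied to $B$ with $a_n=\emptyset$, so $B$ is comeager and $A$ meager. For the direct implication, assume $A$ meager, apply Lemma \ref{1.3} to $B$ to get $(I_n)$ and $(a_n)$, and given $u$ with $\{n: u\cap I_n=\emptyset\}$ infinite, put $w=u\cup\bigcup_{m:\,u\cap I_m=\emptyset}a_m$. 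Then $u\subseteq w$ and $w\cap I_n=a_n$ on the infinite set where $u\cap I_n=\emptyset$, so $w\in B$; downward closure of $B$ yields $u\in B$, that is $u\notin A$.

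The main obstacle, as indicated, is the passage from the exact-match witness of Lemma \ref{1.3} to the monotone condition: in both parts the device is to manufacture a companion set $w$ comparable to $u$, with $w\subseteq u$ in (b) and $w\supseteq u$ in (a), that still realizes $w\cap I_n=a_n$ on infinitely many blocks, after which a single use of monotonicity closes the argument. I would also note that, by the comment following Lemma \ref{1.3}, the $I_n$ may be taken to partition $\omega$ into successive intervals, which is consistent with the notation $I_0<I_1<I_2<\cdots$ of the statement.
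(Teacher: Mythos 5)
Your proof is correct and follows exactly the route the paper intends: the paper gives no details, saying only that the corollary "easily follows from Lemma \ref{1.3}", and your argument supplies precisely those details, correctly identifying monotonicity of $A$ as the bridge between the exact-match condition $u\cap I_n=a_n$ of the lemma and the coarser conditions of the corollary. Both the auxiliary sets $w$ and the passage to the complement in part (a) are sound.
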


This corollary easily follows from Lemma \ref{1.3}. We note that (a) is shown in
\cite{GT}, where it is applied to filters and simply additive measures on $\omega$.

\

 There is a counterpart of Lemma \ref{1.3} for the space $b(X)$ of block-bases of $X$ \cite{FeR1}, which we state below as Proposition 2.5. In this case, one uses compactness and not finiteness, so the general result involves $\epsilon$-nets. Here we shall only be interested in isomorphic properties of block-subspaces, which are preserved by small enough perturbations of the vectors of the basis, and therefore use a simpler form of the characterization of comeager sets of $b(X)$.

If $\tilde{x}$ is a finite block-sequence in $b^{<\omega}(X)$, we say that $z \in b(X)$
{\em passes through} $\tilde{x}$ if $z$ may be written as the concatenation
$$z=\tilde{y}^{\frown}\tilde{x}^\frown w$$
for some $\tilde{y} \in b^{<\omega}(X)$ and some $w \in b(X)$.

\begin{prop} Let $X$ be a Banach space with a basis $(e_n)$. Let $A \subset b(X)$ be such that
$$(y \in A \wedge \overline{\rm span}(y)=\overline{\rm span}(z)) \Rightarrow z \in A.$$
Then the following assertions are equivalent:
\begin{enumerate}
\item $A$ is comeager in $b(X)$,
\item there is a sequence $\tilde{x}_0<\tilde{x}_1<\tilde{x}_2<\cdots$ of successive elements  of $b^{<\omega}(X)$, such that for any $z \in b(X)$, if the set $\{n: z {\rm\ passes\ through\ } \tilde{x}_n\}$ is infinite, then $z \in A$.
\end{enumerate}
\end{prop}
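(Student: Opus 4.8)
The plan is to follow the proof of Lemma \ref{1.3}, replacing the finiteness used there (the compactness of a two-point set) by the genuine compactness of bounded subsets of a fixed finite-dimensional subspace, accessed through finite $\epsilon$-nets; the passage from such a net back to all relevant block vectors is where the restriction to properties invariant under the same-closed-span relation is used. The implication $(2)\Rightarrow(1)$ is the easy half and needs no perturbation. Given the successive sequence $\tilde{x}_0<\tilde{x}_1<\cdots$, set
$$O_n=\{z\in b(X): \exists k\geq n,\ z \text{ passes through } \tilde{x}_k\}.$$
Each $O_n$ is open, since $z$ passing through the fixed finite block $\tilde{x}_k$ is witnessed by an initial segment of $z$; and each $O_n$ is dense, since any finite block sequence $\tilde{s}$ can be prolonged, beyond its support, by some $\tilde{x}_k$ with $k\geq n$ (possible as the $\tilde{x}_k$ are successive and eventually lie past any fixed range), giving $z=\tilde{s}^{\frown}\tilde{x}_k^{\frown}w\in N_{\tilde{s}}\cap O_n$. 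The intersection $\bigcap_n O_n$ is exactly the set of $z$ passing through infinitely many $\tilde{x}_k$, which by (2) is contained in $A$; hence $A$ contains a dense $G_\delta$ and is comeager.

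For $(1)\Rightarrow(2)$ I would mimic the direct implication of Lemma \ref{1.3}. Write $b(X)\setminus A\subseteq\bigcup_n F_n$ with each $F_n$ closed and nowhere dense, and assume the $F_n$ increasing. I construct $\tilde{x}_0<\tilde{x}_1<\cdots$ by induction, arranging that $\min\mathrm{supp}\,\tilde{x}_n=\max\mathrm{supp}\,\tilde{x}_{n-1}+1$, so that the supports tile an increasing sequence of intervals (matching the remark after Lemma \ref{1.3}), and so that passing through $\tilde{x}_n$ forces $z\notin F_{n-1}$. Granting this, if $z\in F_k$ then $z$ passes through no $\tilde{x}_n$ with $n>k$, hence through only finitely many $\tilde{x}_n$; contrapositively, any $z$ passing through infinitely many $\tilde{x}_n$ avoids $\bigcup_n F_n$ and so lies in $A$, which is (2).

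The inductive step is the heart of the matter and the main obstacle. Once $\tilde{x}_{n-1}$ is fixed, say with $\max\mathrm{supp}\,\tilde{x}_{n-1}=m$, any $z$ passing through $\tilde{x}_n$ has the form $z=\tilde{y}^{\frown}\tilde{x}_n^{\frown}w$ with $\tilde{y}$ supported in $[0,m]$; so I must produce a single fresh block $\tilde{x}_n$, supported in $(m,+\infty)$, with $N_{\tilde{y}^{\frown}\tilde{x}_n}\cap F_{n-1}=\emptyset$ for \emph{every} admissible prefix $\tilde{y}$. In Lemma \ref{1.3} there were only finitely many prefixes on $[0,m]$, and one removed the corresponding finite union of nowhere dense traces. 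Here the admissible prefixes form an infinite family, since block vectors supported in $[0,m]$ have coefficients ranging over the infinite set $Q_0$. This is where genuine compactness enters: the normalized vectors supported in $[0,m]$ form a relatively compact subset of $\mathrm{span}(e_0,\dots,e_m)$, so the prefixes are, up to small perturbation, represented by a finite set $D$. For the finitely many representatives in $D$ I proceed exactly as in Lemma \ref{1.3}: the tail traces $\{v:\tilde{y}_0^{\frown}v\in F_{n-1}\}$, $\tilde{y}_0\in D$, are finitely many nowhere dense closed sets of the tail block space, so their union misses a basic cylinder, which I take to be defined by $\tilde{x}_n$; thus $\tilde{y}_0^{\frown}\tilde{x}_n^{\frown}w\notin F_{n-1}$ for all $\tilde{y}_0\in D$ and all $w$.

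It remains to pass from the representatives in $D$ to arbitrary prefixes, and this is exactly where the hypothesis is used: because $A$, and hence $b(X)\setminus A$ and the $F_n$, concerns only isomorphic/spanning data invariant under the same-closed-span relation, non-membership in $F_{n-1}$ is stable under sufficiently small perturbations of the block vectors (the point flagged before the statement). Thus if $\tilde{y}$ is close enough to a representative $\tilde{y}_0\in D$, then $\tilde{y}^{\frown}\tilde{x}_n^{\frown}w$ is a small perturbation of $\tilde{y}_0^{\frown}\tilde{x}_n^{\frown}w\notin F_{n-1}$, and so also avoids $F_{n-1}$, provided $D$ was taken fine enough at this stage. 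The two delicate points I expect are (i) quantifying the perturbation so that invariance under the same-span relation genuinely yields stability of non-membership in $F_{n-1}$, which forces one to choose the $F_n$ and the mesh of $D$ compatibly, and (ii) the bookkeeping of successive supports keeping the $\tilde{x}_n$ block-successive so that the induction closes.
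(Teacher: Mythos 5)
The paper does not actually prove this proposition: it is quoted from \cite{FeR1}, with only the remark that one replaces the finiteness used in Lemma \ref{1.3} by compactness via $\epsilon$-nets and then uses invariance to absorb the perturbations. Your proposal follows exactly that indicated route, and your direction $(2)\Rightarrow(1)$ is correct and standard. The gap is in $(1)\Rightarrow(2)$, at precisely the step you flag as delicate, and it is a missing idea rather than bookkeeping. First, invariance under the same-closed-span relation does \emph{not} yield stability under small perturbations: if $\tilde{y}$ is $\delta$-close to $\tilde{y}_0$ with $\tilde{y}\neq\tilde{y}_0$, then $\overline{\rm span}(\tilde{y}^{\frown}\tilde{x}_n^{\frown}w)$ and $\overline{\rm span}(\tilde{y}_0^{\frown}\tilde{x}_n^{\frown}w)$ are isomorphic (via a small perturbation of the identity of $X$) but in general not equal --- compare $(e_0)^{\frown}w$ with the normalization of $(e_0+\delta e_1)^{\frown}w$, whose closed spans differ. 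So the hypothesis as stated supports nothing at this point; what the paper's surrounding remark really invokes is invariance of $A$ under isomorphism of spans (true in all of its applications, e.g.\ ${\rm Emb}_Y$), which is strictly stronger than the stated hypothesis. Second, even granting that $A$, hence $b(X)\setminus A$, is perturbation-saturated, the individual closed nowhere dense sets $F_n$ of an arbitrary decomposition of the complement inherit no saturation whatsoever from $A$, so the inference ``$\tilde{y}_0^{\frown}\tilde{x}_n^{\frown}w\notin F_{n-1}$ and $\tilde{y}$ close to $\tilde{y}_0$ imply $\tilde{y}^{\frown}\tilde{x}_n^{\frown}w\notin F_{n-1}$'' is unjustified; nor can one simply replace $F_n$ by its perturbation-saturation, since in the product-of-discrete topology on $b(X)$ the set of $\delta$-perturbations of a point is contained in no basic neighbourhood of it, and the saturation may fail to be nowhere dense.

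The repair, which is what the source reference in effect does, is to leave the $F_n$ (or the dense open sets $U_n$ with $\bigcap_n U_n\subseteq A$) untouched, impose the cylinder conditions only on the finitely many representative prefixes of the nets $D_n$, and postpone the perturbation to the very end: from a $z$ passing through infinitely many $\tilde{x}_n$ one builds a \emph{single} block sequence $z'$, replacing each prefix by its representative, with $z'\in\bigcap_n U_n\subseteq A$ and $\sum_i\norm{z_i-z_i'}<\epsilon$, whence $\overline{\rm span}(z')$ is isomorphic to $\overline{\rm span}(z)$ and isomorphism-invariance gives $z\in A$. For this the nets must be chosen coherently across stages (each representative at stage $n$ extending, through $\tilde{x}_{n-1}$, one at stage $n-1$) with summable meshes. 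Your write-up contains neither the coherence of the nets nor the single end-of-proof perturbation, and it rests the perturbation step on a hypothesis that does not support it; as it stands the inductive step does not close.
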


\section{Application to embeddings of Banach spaces}

Let $X$ be a Banach space with a basis $(e_n)_n$. Following \cite{FeR2}, we say that
an (infinite-dimensional) space $Y$ is {\em tight in $X$}  if there is a sequence $I_0<I_1<I_2<\cdots$ of successive intervals such that for all infinite subset $J \subset \N$,
$$Y    \not\embed \overline{{\rm span}}[e_n, n \notin \cup_{j \in J}I_j],$$
where $\embed$ means "embeds isomorphically into".
We say that the space $X$ is {\em tight} if all infinite-dimensional spaces $Y$ are tight in $X$.

Of course these notions really depend on the choice of the basis $(e_n)_n$, so the notation is not exactly accurate, but this will not cause any problem since we shall consider only one choice of basis for $X$.

We recall that $X$ is {\em minimal} if every infinite dimensional subspace of $X$ contains an isomorphic copy of $X$. The main result of \cite{FeR2} asserts that every Banach space contains a tight subspace or a minimal subspace.

The notion of tightness can be linked with the Baire category statements of the previous section through the following results.

\begin{prop}\label{tightness} Let $X$ be a Banach space with a basis $(e_n)_n$ and let $Y$ be a Banach space.
Then the following are equivalent:
\begin{enumerate}
\item[(a)] $Y$ is tight in $X$,
\item[(b)] $E_Y:=\{u \in 2^{\omega}: Y \embed \overline{\rm span}[e_n, n \in u]\}$ is meager in $2^{\omega}$,
\item[(c)] ${\rm Emb}_Y:=\{z \in b(X): Y \embed \overline{\rm span}[z]\}$ is meager in $b(X)$.
\end{enumerate}
\end{prop}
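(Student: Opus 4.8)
The plan is to establish the cycle (a)$\,\Rightarrow\,$(b)$\,\Rightarrow\,$(c)$\,\Rightarrow\,$(a), using Corollary~\ref{1.5} to detect meagerness in $2^\omega$ and Proposition~2.5 to detect it in $b(X)$. Two structural facts make these tools applicable. First, $E_Y$ is \emph{upward closed}: if $u\subseteq v$ then $\overline{\rm span}[e_n,n\in u]\subseteq\overline{\rm span}[e_n,n\in v]$, so $u\in E_Y\Rightarrow v\in E_Y$, and Corollary~\ref{1.5}(a) applies. Second, ${\rm Emb}_Y$ and its complement are invariant under $\overline{\rm span}(y)=\overline{\rm span}(z)$, so Proposition~2.5 applies to $b(X)\setminus{\rm Emb}_Y$. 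The bridge between the two settings is the trivial inclusion $\overline{\rm span}[z]\subseteq\overline{\rm span}[e_n,n\in{\rm supp}(z)]$, where ${\rm supp}(z)=\bigcup_k{\rm supp}(z_k)$.

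For (a)$\,\Rightarrow\,$(b), let $I_0<I_1<\cdots$ witness tightness. If some $u\in E_Y$ missed infinitely many $I_n$, say $u\cap I_j=\emptyset$ for all $j$ in an infinite set $J$, then $u\subseteq\{n:n\notin\bigcup_{j\in J}I_j\}$, so by monotonicity $Y\embed\overline{\rm span}[e_n,n\in u]\embed\overline{\rm span}[e_n,n\notin\bigcup_{j\in J}I_j]$, contradicting tightness. Hence every $u\in E_Y$ meets all but finitely many $I_n$, and Corollary~\ref{1.5}(a) gives that $E_Y$ is meager.

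For (b)$\,\Rightarrow\,$(c), I would use the support map $\psi\colon b(X)\to 2^\omega$, $\psi(z)={\rm supp}(z)$, which is continuous and sends nonempty open sets to non-meager sets: the image of the basic open set fixing a finite initial block sequence is the set of $u$ that agree with a prescribed finite set below some level $M$ and are infinite above $M$, a dense $G_\delta$ inside a clopen piece of $2^\omega$. Any continuous map with this property pulls meager sets back to meager sets, since a closed nowhere dense $N$ with $\psi^{-1}(N)$ of nonempty interior $U$ would force the non-meager set $\psi(U)$ into $N$. By the bridging inclusion, $z\in{\rm Emb}_Y\Rightarrow\psi(z)\in E_Y$, so ${\rm Emb}_Y\subseteq\psi^{-1}(E_Y)$; as $E_Y$ is meager, so is $\psi^{-1}(E_Y)$, and hence so is ${\rm Emb}_Y$.

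The main and most delicate step is (c)$\,\Rightarrow\,$(a). Proposition~2.5, applied to the comeager set $b(X)\setminus{\rm Emb}_Y$, produces successive finite block bases $\tilde x_0<\tilde x_1<\cdots$ such that any $z$ passing through infinitely many $\tilde x_m$ satisfies $Y\not\embed\overline{\rm span}[z]$. Set $I_m={\rm range}(\tilde x_m)$; these are successive intervals, and I claim they witness tightness. If not, some infinite $J$ gives $Y\embed Z_J:=\overline{\rm span}[e_n,n\notin\bigcup_{j\in J}I_j]$, so the retained set $\{n:n\notin\bigcup_{j\in J}I_j\}$ is infinite. I then build a single $z\in b(X)$ by listing, in increasing order of range, the normalized basis vectors $e_n$ with $n\notin\bigcup_{j\in J}I_j$ together with the blocks $\tilde x_m$ for $m\in J$; this is a legitimate block basis because each $I_m$ with $m\in J$ lies inside the discarded region $\bigcup_{j\in J}I_j$, hence is disjoint from the retained coordinates and from the other $I_{m'}$, so the ranges are successive. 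Then $\overline{\rm span}[z]\supseteq Z_J$ yields $Y\embed\overline{\rm span}[z]$, while $z$ passes through $\tilde x_m$ for every $m\in J$, hence through infinitely many of them, forcing $Y\not\embed\overline{\rm span}[z]$ — a contradiction. This construction is the one real obstacle: the two genericities live on different Polish spaces, and the naive subsequence embedding $2^\omega\hookrightarrow b(X)$ has nowhere dense image, so meagerness cannot be pulled back along it. The resolution is therefore asymmetric — a category-preserving support map for (b)$\,\Rightarrow\,$(c), and for (c)$\,\Rightarrow\,$(a) the interleaving above, which simultaneously realizes the embedding (by spanning all of $Z_J$) and forces passage (by hiding the $\tilde x_m$, $m\in J$, inside the discarded intervals). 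The remaining points, that $z$ has vectors in $Q_0$ and that ``passing through'' holds in the technical sense of Section~2, are routine.
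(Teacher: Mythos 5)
Your proposal is correct and follows essentially the same route as the paper: (a)$\Rightarrow$(b) via Corollary \ref{1.5}(a) using upward closure of $E_Y$, (b)$\Rightarrow$(c) via the continuous support map whose images of basic open sets are comeager in clopen sets (hence preimages of meager sets are meager), and (c)$\Rightarrow$(a) via Proposition 2.5 together with the interleaving of the retained $e_n$'s with the blocks $\tilde x_m$, $m\in J$, exactly as in the paper. The only differences are expository — you spell out a few steps the paper leaves implicit.
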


\begin{proof} The implication $(a) \Rightarrow (b)$ is an immediate consequence of Corollary \ref{1.5} (a), applied to $E_Y=\{u \in 2^{\omega}: Y \embed \overline{\rm span}[e_n, n \in u]\}$.

To prove $(b) \Rightarrow (c)$, assume that $E_Y$ is meager.
Let $\phi: b(X) \rightarrow 2^\omega$ be defined by
$$\phi((z_n)_n)=\cup_n{{\rm supp}\ z_n}.$$
It is clear that $\overline{\rm span}\ z \subset \overline{\rm span}\{ e_i, i \in \phi(z)\}$, and 
therefore $${\rm Emb}_Y \subset \phi^{-1}(E_Y).$$
The map $\phi$ is continuous, and for any basic open set $U=N_{z_0,\ldots,z_n}$ of $b(X)$,
$$\phi(U)=N_{\cup_{i \leq n}{\rm supp}\ z_i} \setminus 2^{<\omega},$$
and therefore
$\overline{\phi(U)}=N_{\cup_{i \leq n}{\rm supp}\ z_i}$ is open. This easily implies that
$$A {\rm\ meager\ in\ } 2^\omega \Rightarrow \phi^{-1}(A) {\rm \ meager\ in\ } b(X).$$
If now (b) $E_Y$ is meager, then ${\rm Emb}_Y \subset \phi^{-1}(E_Y)$ is meager, and (c) holds.


To prove $(c) \Rightarrow (a)$, assume ${\rm Emb}_Y=\{z \in b(X): Y \embed \overline{\rm span}\ z\}$ is meager, and for all $j$, let $\tilde{x}_j \in b^{<\omega}(X)$ be such that if $z \in b(X)$, then
$$z {\rm\ passes\ through\ } \tilde{x}_j {\rm\ for\ infinitely\ many\ } j \Rightarrow Y \not\embed \overline{\rm span}\ z.$$
Let $I_j={\rm range\ }\tilde{x}_j$ for each $j$. Let $J$ be an  infinite subset of $\N$,
and consider $$W=\overline{{\rm span}}[e_n, n \notin \cup_{j \in J}I_j].$$
If $z$ is the concatenation (in the appropriate order) of the $e_n$'s for $n \notin \cup_{j \in J}I_j$ and of the $\tilde{x}_j$ for $j \in J$, then $z$ passes through $\tilde{x}_j$ for all $j \in J$ and therefore $Y$ does not embed into $\overline{\rm span}\ z$. Since $W \subset \overline{\rm span}\ z$, $Y$ does not embed into $W$. Since $J$ was arbitrary, we have proven that $Y$ is tight in $X$.
\end{proof}

\begin{thm}\label{2.1} Let $X$ be a Banach space with a basis $(e_n)$, and let $Y$ be a Banach space.
Then exactly one of the two following assertions holds:
\begin{itemize}
\item[(a)] there exists $I_0<I_1<I_2<\cdots$ such that for any $J \subset \omega$ infinite,
$$Y    \not\embed \overline{{\rm span}}[e_n, n \notin \cup_{j \in J}I_j],$$
\item[(b)] there exists  $J_0<J_1<J_2<\cdots$ such that for any $I \subset \omega$ infinite,
$$Y    \embed \overline{{\rm span}}[e_n, n \notin \cup_{i \in I}J_i].$$
\end{itemize}
\end{thm}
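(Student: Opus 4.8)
The plan is to recognize this statement as a genuine dichotomy in the sense of Lemma \ref{1.1}, applied to a carefully chosen equivalence relation on $2^\omega$. The key observation is that assertion (a) is precisely the statement that the set $E_Y=\{u \in 2^\omega : Y \embed \overline{\rm span}[e_n, n \in u]\}$ is meager: by Proposition \ref{tightness}, (a) is equivalent to $Y$ being tight in $X$, which is equivalent to $E_Y$ being meager. I would therefore first reduce the whole theorem to a statement about the set $E_Y$, and then try to show that assertion (b) is exactly the statement that $E_Y$ is comeager. If both reductions succeed, the theorem follows at once from the fact that $E_Y$ is either meager or comeager.

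First I would establish that $E_Y$ has the Baire property and is invariant under a suitable group action, so that the zero-one law applies. The natural relation here is $E_0^\prime$ from Example \ref{1.2}: note that $E_Y$ is invariant under finite modifications of $u$ that preserve the cardinality of the altered initial segment, since isomorphic embeddability into $\overline{\rm span}[e_n, n \in u]$ is unaffected by adding or removing finitely many coordinates (finite-dimensional perturbations do not change embeddability of an infinite-dimensional space). Thus $E_Y$ is $E_0^\prime$-saturated, and provided it has the Baire property, Lemma \ref{1.1} (via Example \ref{1.2}) gives that $E_Y$ is meager or comeager — a clean dichotomy with no third possibility.

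Next I would translate the comeager case into assertion (b) using Corollary \ref{1.5}. The set $E_Y$ is upward closed for inclusion on $2^\omega$: if $u \subset v$ then $\overline{\rm span}[e_n, n \in u]$ is a subspace of $\overline{\rm span}[e_n, n \in v]$, so embeddability into the smaller space forces embeddability into the larger. Hence Corollary \ref{1.5} applies directly. Part (a) of that corollary gives that $E_Y$ is meager iff there exist intervals $I_n$ with $u \in E_Y \Rightarrow \{n : u \cap I_n = \emptyset\}$ finite; rephrasing the complement $u = \omega \setminus \bigcup_{j \in J} I_j$ recovers assertion (a) of the theorem. Part (b) gives that $E_Y$ is comeager iff there exist intervals $J_i$ such that any $u$ containing infinitely many of the $J_i$ lies in $E_Y$; taking complements and using upward closure, one checks this is exactly assertion (b).

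The main obstacle I anticipate is the measurability of $E_Y$, i.e.\ verifying that $E_Y$ has the Baire property so that Lemma \ref{1.1} is applicable. The embeddability relation $\embed$ is defined by an existential quantifier over isomorphisms (equivalently, over bounded operators with bounded inverse), which is an analytic rather than an obviously Borel condition; however, analytic sets have the Baire property, so this should suffice, and I would record $E_Y$ as analytic by expressing the existence of an embedding as a projection of a Borel set in a product of Polish spaces. The remaining work — matching the index sets $I_n$ and $J_i$ of the theorem against those produced by Corollary \ref{1.5}, and keeping the complementation $u \leftrightarrow \omega \setminus \bigcup I_j$ straight — is routine bookkeeping rather than a conceptual difficulty.
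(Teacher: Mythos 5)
Your plan reproduces the paper's proof almost exactly: work with $E_Y$, note it is analytic (hence has the Baire property) and upward closed, get ``meager or comeager'' from the topological $0$--$1$ law via $E_0'$-saturation, and convert the two alternatives into (a) and (b) through Corollary \ref{1.5}. There is, however, one justification that is wrong as written. You argue that $E_Y$ is $E_0'$-saturated ``since isomorphic embeddability into $\overline{\rm span}[e_n, n \in u]$ is unaffected by adding or removing finitely many coordinates.'' That is the stronger claim that $E_Y$ is $E_0$-saturated, and it is false in general: $Y \embed Z \oplus F$ with $F$ finite-dimensional does not imply $Y \embed Z$. (For the Gowers--Maurey space $X_{GM}$ and a hyperplane $H$ one has $X_{GM} \cong H \oplus \R$, yet $X_{GM} \not\embed H$, since an embedding of $X_{GM}$ into itself is a strictly singular perturbation of a nonzero multiple of the identity, hence Fredholm of index $0$ and therefore onto.) The correct reason --- and the whole point of using $E_0'$ rather than $E_0$ --- is the one the paper gives: if $u \mathbin{E_0'} v$ then $\overline{\rm span}[e_n, n \in u]$ and $\overline{\rm span}[e_n, n \in v]$ are actually \emph{isomorphic}, because they share a common tail and the complementary initial pieces have the same finite dimension. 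This is a one-line repair, but as stated your justification proves too much and is false. Two smaller points: the passage from ``$E_Y$ comeager'' to the specific form of (b) requires choosing the $J_i$'s as, say, every other block of the partition produced by Corollary \ref{1.5}, so that the complement of any union $\cup_{i\in I}J_i$ still contains infinitely many of the original blocks --- this is the ``bookkeeping'' you defer, and it is where the actual work sits; and you never address the ``exactly one'' half of the statement, which does not follow from Corollary \ref{1.5} alone and which the paper handles by a separate (brief) disjointness argument comparing the two families of intervals.
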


\begin{proof}
Recall that $$E_Y=\{u \in 2^{\omega}: Y \embed \overline{\rm span}[e_n, n \in u]\}.$$
It is easy to check that $E_Y$ is an analytic subset of $2^{\omega}$ and thus it is has the Baire property. Obviously, $u \in E_Y$ and $u \subset v$ implies that $v \in E_Y$.

if $u E_0^{\prime} v$ (see Example \ref{1.2}), then the closed linear spans of
$[e_n, n \in u]$ and $[e_n, n \in v]$ are isomorphic. Hence $E_Y$ is $E_0^{\prime}$-saturated and thus by Lemma 1.1 it is meager or comeager. The result now follows from Corollary \ref{1.5}. 

Note that for checking that (a) and (b) are mutually exclusive, it suffices to apply them with two infinite sets $I$ and $J$ 
such that $\big(\cup_{j \in J}I_j\big) \cap (\cup_{i \in I}J_i\big) = \emptyset.$
\end{proof}

\begin{ex}\label{2.2} Tightness and minimality. \end{ex}
A space $X$ is tight when (a) holds for any $Y$, or equivalently, for any block-subspace $Y=\overline{\rm span}\ y$ generated by some $y \in b(X)$.

 On the other hand, (b) holds for any  minimal subspace $Y$ of $X$: indeed if (a) holded, and if we picked a subspace $Z$ of $Y$ embedding isomorphically into $\overline{\rm span}[e_n; n \in \cup_{j \in K}I_j]$ for some $K \subset \omega$ coinfinite, then we would deduce from (a) that $Y$ does not embed into $Z$, contradicting minimality.

In particular we see that a tight space does not contain any minimal subspace. Also,
since every subspace of a minimal space is minimal, it follows that if $X$ is minimal, then (b) holds for every subspace $Y$ of $X$,

\begin{ex} Tightness with constants. \end{ex}

If there are successive subsets $I_j$ of $\N$ such that
for each $j$,
$$Y    \not\embed_j \overline{{\rm span}}[e_n, n \notin I_j],$$
where $\embed_j$ means "embeds with constant $j$", then we may use the $I_j$'s to prove that $Y$ is tight in $X$; we say in that case that 
$Y$ is tight in $X$ {\em with constants}. When all infinite-dimensional spaces are tight 
in $X$ with constants, then $X$ is said to be {\em  tight with constants}.
This notion was defined and studied in \cite{FeR2}; Tsirelson's space $T$ is the typical space satisfying tightness with constants. 

\

Defining for $j \in \N$, 
$$E_Y(j):=\{u \in 2^{\omega}: Y \embed_j \overline{\rm span}[e_n, n \in u]\},$$
we have of course $$E_Y=\cup_{j \in \N}E_Y(j).$$
The next proposition, a counterpart of Proposition  \ref{tightness} in the case of $j$-embeddings, shows that $Y$ being tight in $X$ with constants is equivalent to saying
that all $E_Y(j)$'s are nowhere dense in $2^{\omega}$. In other words we have in that case a very natural description of $E_Y$ as a countable union of nowhere dense sets. A similar result holds for ${\rm Emb}_Y$.

\begin{prop}\label{constants}
 Let $X$ be a Banach space with a basis $(e_n)_n$ and let $Y$ be a Banach space.
Then the following are equivalent:
\begin{enumerate}
\item[(a)] $Y$ is tight in $X$ with constants,
\item[(b)] $E_Y(j):=\{u \in 2^{\omega}: Y \embed_j \overline{\rm span}[e_n, n \in u]\}$ is nowhere dense in $2^{\omega}$ for all $j \in \N$,
\item[(c)] ${\rm Emb}_Y(j):=\{z \in b(X): Y \embed_j \overline{\rm span}[z]\}$ is nowhere dense in $b(X)$ for all $j \in \N$.
\end{enumerate}
\end{prop}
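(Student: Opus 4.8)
The plan is to establish the cycle $(a) \Rightarrow (b) \Rightarrow (c) \Rightarrow (a)$, following the scheme of Proposition \ref{tightness} but replacing ``meager'' by ``nowhere dense'' throughout. Two monotonicity facts will be used repeatedly. First, since an embedding with constant $j$ is also an embedding with constant $j'$ whenever $j \le j'$, we have $E_Y(j) \subseteq E_Y(j')$ and ${\rm Emb}_Y(j) \subseteq {\rm Emb}_Y(j')$ for $j \le j'$. Second, each of the sets $E_Y(j)$ and ${\rm Emb}_Y(j)$ is stable under passing to a larger space, since an embedding with constant $j$ into a subspace composes with the isometric inclusion to give an embedding with constant $j$ into the whole space; in particular $E_Y(j)$ is upward closed for inclusion in $2^\omega$.

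For $(a) \Rightarrow (b)$, fix successive intervals $I_i$ witnessing tightness with constants, and fix $j$. For $i \geq j$, the relation $Y \not\embed_i \overline{\rm span}[e_n, n \notin I_i]$ together with upward closedness gives $E_Y(i) \subseteq \{u : u \cap I_i \neq \emptyset\}$, and the nesting $E_Y(j) \subseteq E_Y(i)$ then yields
$$E_Y(j) \subseteq \bigcap_{i \geq j}\{u : u \cap I_i \neq \emptyset\}.$$
Since the $I_i$ are successive, $\min I_i \to \infty$, and the right-hand side is nowhere dense: a basic clopen set fixes $u$ on some initial segment $[0,L]$, and choosing $i \geq j$ with $I_i \subseteq (L,\infty)$ yields a nonempty sub-clopen set of points avoiding $I_i$, hence disjoint from the intersection. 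Thus $E_Y(j)$ is nowhere dense. This is exactly the point where the nesting upgrades the merely ``meager'' conclusion available from Corollary \ref{1.5} to ``nowhere dense''.

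For $(b) \Rightarrow (c)$ I would reuse the continuous map $\phi \colon b(X) \to 2^\omega$, $\phi(z)=\cup_n {\rm supp}\ z_n$, from the proof of Proposition \ref{tightness}, together with the fact established there that $\overline{\phi(U)}$ is clopen for every basic open $U \subseteq b(X)$. Rereading that computation with ``nowhere dense'' in place of ``meager'' shows that $\phi^{-1}(A)$ is nowhere dense whenever $A$ is: indeed $\phi^{-1}(A) \subseteq \phi^{-1}(\overline A)$ is closed, and if some basic open $U$ were contained in it then $\overline{\phi(U)}$ would be a nonempty clopen subset of $\overline A$, contradicting that $\overline A$ has empty interior. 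Since ${\rm Emb}_Y(j) \subseteq \phi^{-1}(E_Y(j))$, hypothesis (b) forces each ${\rm Emb}_Y(j)$ to be nowhere dense.

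The main work, and the principal obstacle, is $(c) \Rightarrow (a)$: one must extract from nowhere density a \emph{single} interval obstruction per constant, in contrast with the ``infinitely many blocks'' obstruction of Proposition \ref{tightness}. I would build the intervals by induction. Assuming $I_0 < \cdots < I_{j-1}$ constructed, put $M = \max I_{j-1}$ and apply nowhere density of ${\rm Emb}_Y(j)$ to the basic open set $N_{e_0,\ldots,e_M}$ (taking the basis normalized, so that each $e_n \in Q_0$): there is a finite block extension $N_{e_0,\ldots,e_M,z_{M+1},\ldots,z_m}$ disjoint from ${\rm Emb}_Y(j)$, the blocks $z_i$ being supported in $(M,\infty)$. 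Setting $N=\max {\rm range}\ z_m$ and $I_j=[M+1,N]$, the block sequence
$$z^\ast=(e_0,\ldots,e_M,z_{M+1},\ldots,z_m,e_{N+1},e_{N+2},\ldots)$$
lies in this open set, so $Y \not\embed_j \overline{\rm span}\ z^\ast$; since $\overline{\rm span}[e_n, n \notin I_j] \subseteq \overline{\rm span}\ z^\ast$, the monotonicity fact gives $Y \not\embed_j \overline{\rm span}[e_n, n \notin I_j]$. The intervals are successive by construction, which is precisely (a). The conceptual heart is that for a fixed constant the relevant set is nowhere dense and not merely meager, so a single finite block already prevents $j$-embedding into the entire complement of an interval; spreading these intervals out to infinity is what makes each $E_Y(j)$ individually nowhere dense.
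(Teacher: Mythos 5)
Your proposal is correct and follows essentially the same route as the paper: the same cycle $(a)\Rightarrow(b)\Rightarrow(c)\Rightarrow(a)$, the same use of the nesting $E_Y(j)\subseteq E_Y(k)$ to trap $E_Y(j)$ in $\bigcap_{k\geq j}\{u: u\cap I_k\neq\emptyset\}$, the same map $\phi$ with $\overline{\phi(U)}$ open for $(b)\Rightarrow(c)$, and the same inductive construction of successive blocks avoiding ${\rm Emb}_Y(j)$ on basic open sets of the form $N_{(e_0,\ldots,e_M)^\frown\tilde{x}_j}$ for $(c)\Rightarrow(a)$. You merely spell out in more detail the steps the paper leaves implicit.
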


\begin{proof} $(a) \Rightarrow (b)$: let $(I_j)$ be successive such that for each $j$,
$Y \not\embed_j [e_n, n \notin I_j]$.
This means that
$$E_Y(j) \subset \{u \in 2^{\omega} \del u \cap I_j \neq \emptyset\},$$
and since $E_Y(j) \subseteq E_Y(k)$ whenever $j \leq k$, that
$$E_Y(j) \subset \bigcap_{k \geq j}\{u \in 2^{\omega} \del u \cap I_k \neq \emptyset\}.$$
The set on the right hand side of this inclusion is closed with empty interior, so $E_Y(j)$ is nowhere dense for all $j$.

To prove $(b) \Rightarrow (c)$, let $\phi: b(X) \rightarrow 2^\omega$ be defined as in Proposition \ref{tightness} by
$\phi((z_n)_n)=\cup_n{{\rm supp}\ z_n},$ 
therefore for $j \in \N$, $${\rm Emb}_Y(j) \subset \phi^{-1}(E_Y(j)).$$
Since the map $\phi$ is continuous, and for any basic open set $U$ of $b(X)$,
$\overline{\phi(U)}$ is open, it follows that
$$E_Y(j)\ {\rm nowhere\ dense\ } \Rightarrow \phi^{-1}(E_Y(j))
{\rm\ nowhere\ dense\ }\Rightarrow {\rm Emb}_Y(j) {\rm\ nowhere\ dense}.$$

To prove $(c) \Rightarrow (a)$, assume ${\rm Emb}_Y(j)=\{z \in b(X): Y \embed \overline{\rm span}\ z\}$ is nowhere dense for all $j \in \N$.
We may use induction to find successive $\tilde{x}_j \in b^{\omega}(X)$ so that if
$I_j$ denotes ${\rm range\ }\tilde{x}_j$ and 
$n_j:=\max\ I_j$, then
$$N_{(e_0,e_1,\ldots,e_{n_{j-1}})^\frown
\tilde{x}_j} \cap {\rm Emb}_Y(j) = \emptyset$$ for all $j$.
We may assume the $I_j$ form a partition of $\N$,  and this implies that
$Y$ does not embed with constant $j$ into $\overline{\rm span}[e_i, i \notin I_j]$. Therefore
(a) is proved.
\end{proof}

\

In the next section, we will display an embedding dichotomy similar to Theorem \ref{2.1} within the set $b(X)$ of block-subspaces of a given Banach space with a basis.

\section{Topological $0-1$-laws for the classical relations on $b(X)$}

Let $X$ be a Banach space with a basis $(e_n)$, and let $b(X)$ be the Polish space of its
block sequences. We denote by $\embed$ (resp. $\simeq$) the relation of embeddability (resp. isomorphism) between subspaces of $X$. We consider the following subsets of $b(X)^2$:
$${\rm Is}=\big\{(y,z) \in b(X)^2: \overline{\rm span}\ y \simeq \overline{\rm span}\ z\big\},$$
$${\rm Be}=\big\{(y,z) \in b(X)^2: \overline{\rm span}\ y \embed \overline{\rm span}\ z\ {\rm and\ }
\overline{\rm span}\ z \embed \overline{\rm span}\ y\big\},$$
$${\rm Emb}=\big\{(y,z) \in b(X)^2: \overline{\rm span}\ y \embed \overline{\rm span}\ z\big\}.$$
Obviously $${\rm Is} \subseteq {\rm Be} \subseteq {\rm Emb}.$$
The main result of this section is:

\begin{thm}\label{3.1} The relations ${\rm Is}$, ${\rm Be}$, and ${\rm Emb}$ are meager or comeager in the Polish space
$b(X)^2$. \end{thm}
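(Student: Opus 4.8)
The plan is to deduce all three statements from the topological $0$--$1$ law of Lemma \ref{1.1}, applied to the Polish space $P = b(X)^2$. First I would record that each of ${\rm Is}$, ${\rm Be}$ and ${\rm Emb}$ is an analytic subset of $b(X)^2$: membership is witnessed by the existence of a bounded linear embedding (a single sequence of scalars defining an operator on $\overline{\rm span}\,y$), so each relation is a projection of a Borel set, hence analytic, and therefore has the Baire property. It then remains to produce a group $G$ of homeomorphisms of $b(X)^2$ satisfying the ergodicity hypothesis of Lemma \ref{1.1} and leaving each of the three relations invariant.

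The key reduction is that all three relations are defined purely in terms of the closed linear spans $\overline{\rm span}\,y$ and $\overline{\rm span}\,z$ \emph{up to isomorphism}. Hence it suffices to build a group $H$ of homeomorphisms of $b(X)$ such that $\overline{\rm span}(h y) \simeq \overline{\rm span}\,y$ for every $h \in H$ and every $y \in b(X)$; the product group $G = H \times H$, acting coordinatewise on $b(X)^2$, will then preserve ${\rm Emb}$, ${\rm Is}$ and ${\rm Be}$, since embeddability, isomorphism and bi-embeddability are all unchanged when the spaces involved are replaced by isomorphic ones.

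To construct $H$, for any two finite block-bases $w, w' \in b^{<\omega}(X)$ that are incomparable and satisfy $\max{\rm range}(w) = \max{\rm range}(w')$, I would define the involution $\tau_{w,w'}$ of $b(X)$ sending $w^\frown t \mapsto w'^\frown t$ on the basic clopen set $N_w$, sending $w'^\frown t \mapsto w^\frown t$ on $N_{w'}$, and acting as the identity elsewhere. The equality of maximal ranges guarantees that the admissible tails $t$ are exactly the same on both sides, so $\tau_{w,w'}$ is a well-defined homeomorphism; and since $\overline{\rm span}(w^\frown t) \simeq \overline{\rm span}\,t \simeq \overline{\rm span}(w'^\frown t)$ — a finite-dimensional space added to the fixed infinite-dimensional tail $\overline{\rm span}\,t$ does not alter its isomorphism type — each $\tau_{w,w'}$ preserves the isomorphism type of spans. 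I let $H$ be the group generated by all such involutions; it inherits this last property.

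Ergodicity of $H$ on $b(X)$ — which yields ergodicity of $G = H \times H$ on $b(X)^2$, since the two coordinates may be treated independently — is then straightforward: given nonempty basic open sets $N_a, N_b$, I may assume them disjoint (otherwise the identity already works), so $a, b$ are incomparable; extending each by one suitable block I obtain incomparable $\tilde a \supseteq a$ and $\tilde b \supseteq b$ sharing a common maximal range, and then $\tau_{\tilde a, \tilde b}$ maps $\tilde a^\frown t \in N_a$ to $\tilde b^\frown t \in N_b$ for any admissible tail $t$. Applying Lemma \ref{1.1} separately to ${\rm Is}$, ${\rm Be}$ and ${\rm Emb}$ finishes the proof. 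The main obstacle is the simultaneous fulfilment of the two requirements on $H$: the swap maps must be honest homeomorphisms that preserve the isomorphism type of spans (which forces the matching-range condition and relies on the finite-dimensional perturbation argument), while still being numerous enough to move any prefix onto any other, which is exactly what delivers ergodicity.
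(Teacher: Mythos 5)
Your overall strategy is exactly the paper's: build a group of prefix-swapping homeomorphisms of $b(X)$ acting coordinatewise on $b(X)^2$, check that orbits are dense and that the swaps preserve the isomorphism type of spans, and invoke Lemma \ref{1.1} together with analyticity. However, there is a genuine flaw in your invariance step. You justify $\overline{\rm span}(w^\frown t)\simeq\overline{\rm span}(w'^\frown t)$ via the chain $\overline{\rm span}(w^\frown t)\simeq\overline{\rm span}\,t\simeq\overline{\rm span}(w'^\frown t)$, asserting that adding a finite-dimensional space to the fixed tail does not alter its isomorphism type. That assertion is precisely Banach's hyperplane problem, and it is false in general: Gowers' space $G_u$ (which appears in this very paper) is not isomorphic to its hyperplanes, so for a block subspace $Z$ of such a space one can have $Z\not\simeq F\oplus Z$ with $F$ finite-dimensional. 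Since you only required $\max{\rm range}(w)=\max{\rm range}(w')$ and not that $w$ and $w'$ have the same number of terms, your generators $\tau_{w,w'}$ may replace $\overline{\rm span}\,y=[\,w\,]\oplus Z$ by $[\,w'\,]\oplus Z$ with $\dim[\,w\,]\neq\dim[\,w'\,]$, and these two spaces need not be isomorphic; the relations ${\rm Is}$, ${\rm Be}$ and ${\rm Emb}$ are then not invariant under your group $H$, and Lemma \ref{1.1} does not apply.

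The repair is small and brings you back to the paper's argument: require in addition that $w$ and $w'$ have the same length. Then $[\,w\,]$ and $[\,w'\,]$ have the same finite dimension, and $[\,w\,]\oplus Z\simeq[\,w'\,]\oplus Z$ directly (map one finite summand onto the other by any linear bijection and use the identity on $Z$), with no appeal to $Z\simeq F\oplus Z$. Your ergodicity argument survives unchanged, since two incomparable finite block bases can always be extended to incomparable ones of equal length and equal maximal range. (Your matching-max-range condition is in fact a useful point of care about well-definedness of the swap, which the paper glosses over by asking only for equal length.)
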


\begin{proof} Pick $\tilde{x}$ and $\tilde{y}$ in $b^{<\omega}(X)$ with same length, and denote 
by $T_{\tilde{x},\tilde{y}}$ the homeomorphism $T$ of $b(X)$ defined by
$$T(\tilde{x}^\frown z)=\tilde{y}^\frown z,$$
$$T(\tilde{y}^\frown z)=\tilde{x}^\frown z,$$
$$T(z)=z {\rm \ if\ } z \notin N(\tilde{x}) \cup N(\tilde{y}).$$
In other words, $T_{\tilde{x},\tilde{y}}$ substitutes $\tilde{x}$ to $\tilde{y}$ (and conversely) in
$N(\tilde{x}) \cup N(\tilde{y})$ and does nothing else.

Let $G$ be the group of homeomorphisms of $b(X)^2$ generated by the maps
$(T_{\tilde{x},\tilde{y}},T_{\tilde{u},\tilde{v}})$, where $(\tilde{x},\tilde{y})$
and $(\tilde{u},\tilde{v})$  are arbitrary pairs of elements of $b^{<\omega}(X)$ with same length.
It is easily seen that the $G$-orbit of any point $(x,y) \in b(X)^2$ is dense. Moreover, one clearly has
$$\overline{\rm span}\ T_{\tilde{x},\tilde{y}}(u)\simeq \overline{\rm span}\ u$$
for all $u \in b(X)$, and it follows that the sets ${\rm Is}$, ${\rm Be}$ and ${\rm Emb}$ are $G$-invariant. They are clearly analytic, hence have the Baire Property, and Lemma \ref{1.1} concludes the proof.
\end{proof}

\begin{rem}\label{3.2} Continuum of non-isomorphic subspaces: \end{rem} The
 Kuratowski-Mycielski theorem (see (19.1) in \cite{Ke}) asserts that if a relation $R$ is meager in the perfect Polish space $b(X)^2$, then there is an homeomorphic copy $K$ of the Cantor set in $b(X)$ such that $\neg(x R y)$ for all $x \neq y$ in $K$. Hence if ${\rm Is}$ is meager, the space $X$ contains a continuum of non-isomorphic subspaces.

\

 If $j \in \N$, we denote ${\rm Emb}(j)=\big\{(y,z) \in b(x)^2 \del \overline{\rm span}\ y \embed_j
\overline{\rm span}\ z\big\}$, and observe that
$${\rm Emb}=\cup_{j \in \N}{\rm Emb}(j).$$
The next observation makes a link with the previous section.

\begin{prop}\label{3.3} Let $X$ be a space with a basis $(e_n)$. The following hold: \begin{itemize}
\item[(a)] If $X$ is tight, then the relation ${\rm Emb}$ is meager in $b(X)^2$.
\item[(b)] If $X$ is tight with constants, then the relation ${\rm Emb}(j)$ is nowhere dense in $b(X)^2$ for all $j \in \N$.
\end{itemize} \end{prop}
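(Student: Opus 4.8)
The plan is to reduce Proposition \ref{3.3} to the characterizations of tightness already established in Proposition \ref{tightness} and Proposition \ref{constants}, together with the product structure of $b(X)^2$. The key observation is that the relations ${\rm Emb}$ and ${\rm Emb}(j)$ are, fiber by fiber, exactly the sets ${\rm Emb}_Y$ and ${\rm Emb}_Y(j)$ from the previous section, taken with $Y = \overline{\rm span}\ y$ as the first coordinate ranges over $b(X)$. So the strategy is to slice $b(X)^2$ along the first coordinate and apply the one-variable results, then reassemble via a Fubini-type / Kuratowski–Ulam argument for category in product spaces.

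For part (a), I would argue as follows. Assume $X$ is tight. Fix $y \in b(X)$ and set $Y = \overline{\rm span}\ y$. Since $X$ is tight, $Y$ is tight in $X$, so by Proposition \ref{tightness}, the vertical section ${\rm Emb}_Y = \{z \in b(X): Y \embed \overline{\rm span}\ z\}$ is meager in $b(X)$. But this vertical section is precisely the $y$-section ${\rm Emb}^y = \{z : (y,z) \in {\rm Emb}\}$. Thus \emph{every} vertical section of ${\rm Emb}$ is meager. The set ${\rm Emb}$ has the Baire property (it is analytic, as noted in the proof of Theorem \ref{3.1}), so by the Kuratowski–Ulam theorem (the product analogue of Fubini's theorem for category; see \cite{Ke}), a set with the Baire property all of whose vertical sections are meager is itself meager in the product. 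This yields that ${\rm Emb}$ is meager in $b(X)^2$.

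For part (b), the scheme is identical but cleaner, since nowhere density passes through products more directly. Assume $X$ is tight with constants and fix $j \in \N$. For each $y \in b(X)$, writing $Y = \overline{\rm span}\ y$, tightness with constants gives that $Y$ is tight in $X$ with constants, so by Proposition \ref{constants} the section ${\rm Emb}_Y(j) = \{z : Y \embed_j \overline{\rm span}\ z\}$ is nowhere dense in $b(X)$. Again this is exactly the $y$-section of ${\rm Emb}(j)$. One then wants to conclude that ${\rm Emb}(j)$ is nowhere dense in $b(X)^2$; since ${\rm Emb}(j)$ is analytic, hence has the Baire property, and all its vertical sections are nowhere dense (in particular meager), Kuratowski–Ulam gives that ${\rm Emb}(j)$ is meager. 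To upgrade from meager to nowhere dense, I would note that the sets defining tightness with constants produce \emph{closed} obstructions uniformly: as in the proof of Proposition \ref{constants}(a), the successive intervals $I_k$ force ${\rm Emb}_Y(j)$ to sit inside the closed nowhere-dense set $\bigcap_{k \geq j}\{z : \mathrm{range}(z) \cap I_k \neq \emptyset\}$-type sets, and pulling this uniformity back through the continuous support map $\phi$ of both coordinates shows ${\rm Emb}(j)$ lies in a closed set with empty interior.

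The main obstacle I anticipate is the passage from ``all vertical sections are meager (resp. nowhere dense)'' to ``the set is meager (resp. nowhere dense) in the product.'' For meagerness this is precisely Kuratowski–Ulam and requires the Baire property of ${\rm Emb}$, which is available since the relation is analytic; the one subtlety is to confirm that the same interval sequence can be chosen \emph{uniformly} in $y$, which is exactly what the definition of $X$ being tight (with constants) guarantees, rather than merely $Y$ being tight for each individual $Y$. For nowhere density in part (b), meagerness alone does not formally give nowhere density, so the genuine work is to exhibit a single closed nowhere-dense set in $b(X)^2$ containing ${\rm Emb}(j)$; this is where the uniform closed obstructions coming from Proposition \ref{constants} must be invoked rather than just the abstract category dichotomy.
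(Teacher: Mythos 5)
Your part (a) is correct and is exactly the paper's argument: tightness of $X$ gives, via Proposition \ref{tightness}(c), that every vertical section ${\rm Emb}_y$ is meager, and Kuratowski--Ulam (legitimately applicable since ${\rm Emb}$ is analytic and hence has the Baire property) finishes. Nothing to add there.

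Part (b) has a genuine gap, and it sits precisely where you located the difficulty. First, your claim that the required uniformity ``is exactly what the definition of $X$ being tight with constants guarantees'' is false: the definition only says that for \emph{each} $Y$ there is a sequence of intervals $I_k(Y)$ witnessing $Y \not\embed_k \overline{\rm span}[e_n, n \notin I_k(Y)]$, and these sequences vary with $Y$; no single sequence is provided that works simultaneously for all block-subspaces $Y$. The uniform statement one actually needs is a separate theorem (\cite{FeR2}, Proposition 4.1): there exist successive intervals $(I_j)$ such that $\overline{\rm span}[e_i, i \in I_j] \not\embed_j \overline{\rm span}[e_i, i \notin I_j]$ for every $j$. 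Note the shape of this statement: the obstruction is \emph{diagonal}, tying the $j$-th interval to the $j$-th embedding constant and to the subspace spanned by that very interval, rather than to an arbitrary fixed $Y$. Second, even granting this uniformity, your proposed upgrade from meager to nowhere dense --- enclosing ${\rm Emb}(j)$ in a closed set obtained by pulling back one-coordinate conditions through the support map $\phi$ --- does not go through, because the sections ${\rm Emb}_Y(j)$ are not all contained in a single closed nowhere dense subset of the second factor: which intervals obstruct the embedding depends on the first coordinate. The paper's argument is irreducibly two-dimensional: given basic open sets $N(\tilde{x})$ and $N(\tilde{y})$ and $k \in \N$, choose $j \geq k$ with $I_j$ supported after both $\tilde{x}$ and $\tilde{y}$; then every $y'$ in $N(\tilde{x}^{\frown}(e_i)_{i \in I_j})$ spans a space containing $\overline{\rm span}[e_i, i \in I_j]$, every $z'$ in $N(\tilde{y}^{\frown}e_{1+\max I_j})$ spans a subspace of $\overline{\rm span}[e_i, i \notin I_j]$, so this product open set is disjoint from ${\rm Emb}(j) \supseteq {\rm Emb}(k)$. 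That exhibits, inside every basic open box, a nonempty open box missing ${\rm Emb}(k)$, which is what nowhere density requires. You should replace your pullback sketch by this explicit construction and cite the uniform interval sequence as an external input rather than as a consequence of the definition.
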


\begin{proof} (a) If $X$ is tight, then by Proposition \ref{tightness} (c), the set 
$${\rm Emb}_y=\{z \in b(X): (y,z) \in {\rm Emb}\}$$
is meager in $b(X)$ for any $y \in b(X)$. The Kuratowski-Ulam theorem (\cite{Ke} Th. 8.41) then shows that ${\rm Emb}$ is meager since all its fibers are. 

(b) If $X$ is tight by constants, then in particular, by  \cite{FeR2} Proposition 4.1, we may find a successive sequence $(I_j)$ of intervals such that for all $j$, $$\overline{\rm span}[e_i, i \in I_j] \not\embed_j \overline{\rm span}[e_i, i \notin I_j].$$
Fix $k \in \N$,  and given $\tilde{x},\tilde{y}$  in $b^{<\omega}(X)$, pick 
$j \geq k$ so that $I_j$ is supported after $\tilde{x}$ and $\tilde{y}$.
Then it follows that
$$\big(N(\tilde{x}^{\frown}(e_i)_{i \in I_j}) \times N(\tilde{y}^\frown e_{1+\max I_j})\big) \cap {\rm Emb}(j)=\emptyset.$$
Since ${\rm Emb}(k) \subset  {\rm Emb}(j)$ we deduce that
$N(\tilde{x}) \times N(\tilde{y})$ contains an open set which is disjoint from ${\rm Emb}(k)$.
Since $\tilde{x},\tilde{y}$ were arbitrary, this means that ${\rm Emb}(k)$ is nowhere dense.
 \end{proof}

Observe that from Proposition \ref{tightness} (b), we may deduce equivalently to (a) that if $X$ is tight, then
the set
$$\{(y,u) \in b(X)\times 2^\omega: \overline{\rm span}\ y \embed \overline{\rm span}[e_n, n \in u]\}$$
is meager in $b(X)\times 2^\omega.$

\

It follows from Proposition \ref{3.3} and the Kuratowski-Mycielski theorem that every tight space contains a continuum of subspaces which do not embed into each other. This also follows from 
(\cite{FeR2} Th.7.3).

In fact, this argument goes beyond the case of tight spaces, since we have:

\begin{ex} A  space with an unconditional basis which is not tight, although ${\rm Emb}$ is meager.\end{ex}

\begin{proof} Let $G_u$ be Gowers' "tight by support" space, that is, such that all disjointly supported subspaces on its canonical basis $(u_n)$ are totally incomparable \cite{Go}. Let $(f_n)$ be the canonical basis of $\ell_2$. We consider $X=G_u \oplus \ell_2$, equipped with the basis $(u_0,f_0,u_1,f_1,\ldots)$.

By the remarks of Example \ref{2.2}, (b) of Theorem \ref{2.1} holds for $Y=\ell_2$. Therefore
(a) does not hold for this choice of $Y$ and therefore $X$ is not tight.

To prove that ${\rm Emb}$ is meager, it is enough by the Kuratowski-Ulam theorem to prove that for $y$ in a comeager subset of $b(X)$, the set
${\rm Emb}_y$ is meager in $b(X)$, or equivalently by Proposition \ref{tightness}, that  the set
$$E_Y=\{u \in 2^{\omega}: Y \embed \overline{\rm span}[e_n, n \in u]\}$$
is meager (where $Y$ denotes $\overline{\rm span}\ y$).
Let therefore $\Omega$ be the comeager set of all $y \in b(X)$ which pass through infinitely many $u_n$'s. We claim that for $y \in \Omega$, the set $E_Y$ is meager in $2^{\omega}$.

We may and do assume that $y$ is a subsequence of $(u_n)$. If $E_Y$ is not meager, then it is comeager, and therefore by Corollary \ref{1.5}, there exist $I_0<I_1<I_2<\cdots$ such that if $u$ contains infinitely many $I_n$'s, then $u \in E_Y$. In other words, if $u$ contains
infinitely many $I_n$'s, then
$$Y \embed \overline{\rm span}[e_n, n \in u].$$
Passing to a subsequence of $y$ whose supports on $(e_i)$ are disjoint from infinitely many
$I_n$'s, and letting $u$ be the union of these $I_n$'s, we may therefore assume that
$$({\rm supp}\ y) \cap u=\emptyset.$$
This implies that $Y$ embeds into a direct sum $\ell_2 \oplus Z$, where $Z$ is a subspace of $G_u$ which is disjointly supported from $Y$. On the other hand, since $G_u$ is tight by support, $Y$ is totally incomparable with $\ell_2$ and with $Z$, therefore (\cite{LT} Prop. 2.c.5) every operator from $W$ to $\ell_2 \oplus Z$ is strictly singular, which is a contradiction.
\end{proof}

Hence the largest relation ${\rm Emb}$ can be meager for spaces which are not tight. On the other hand, the relation ${\rm Be}$ - and thus the relation ${\rm Emb}$ - is trivial for minimal spaces, and hence it is of course comeager. In the next section we shall see that the converse is false, even for the smallest relation ${\rm Is}$. We shall also show that ${\rm Is}$ may be meager for minimal spaces.

\section{Some more examples}

We start by giving two non-minimal examples of spaces for which ${\rm Is}$ is comeager. The first is an easily defined infinite $\ell_2$-sum which is not minimal. The second is more involved and does not even contain a minimal subspace.

\begin{ex}\label{ex1} An $\ell_2$-sum with an unconditional basis which is not minimal, but such that ${\rm Is}$ is comeager. \end{ex}

\begin{proof} We fix $p \neq 2$ and let $X=\Big(\sum_n \oplus \ell_p^n\Big)_2$.
The space $X$ is not minimal, since it contains $\ell_2$ but does not embed into $\ell_2$.
On the other hand it can be shown - using e.g. the arguments from \cite{LT}, Prop. 1.g.4 -
that if $z \in b(X)$, then $\overline{\rm span}\ z$ is isomorphic to $\ell_2$ or to $X$.
If $b(X)= A \cup B$ is the partition of $b(X)$ into the two corresponding ${\rm Is}$-classes,
we deduce that $A$ or $B$ is non-meager. Hence ${\rm Is}$ is non-meager and therefore comeager
by Theorem \ref{3.1} - equivalently, $A$ or $B$ is comeager in $b(X)$. \end{proof}

The comeager class in Example \ref{ex1} is actually the class of $X$. This follows for instance from the next observation. 

\begin{rem} Let $X$ be a space with an unconditional basis $(e_n)$. 
If $$\big\{z \in b(X) \del \overline{\rm span}\ z \simeq \ell_2\big\}$$
is comeager, then $X$ is isomorphic to $\ell_2$.  \end{rem}

\begin{proof} Assuming
$A=\{z \in b(X): \overline{\rm span}\ z \simeq \ell_2\}$ is comeager in $b(X)$, let $\tilde{x}_n \in b^{<\omega}(X)$ be successive such that if $z$ passes
through infinitely many $\tilde{x}_n$'s, then $\overline{\rm span}\ z$ is isomorphic to $\ell_2$.
W may assume that the intervals $I_n={\rm range}\ \tilde{x}_n$ form a partition of $\omega$.
Then the concatenation of the $\tilde{x}_{2n}$'s and of the $e_i$ for $i \in \cup_n I_{2n+1}$ is in $A$, from which it follows that
$$\overline{\rm span}[e_i, i \in \cup_n I_{2n+1}]$$
embeds into $\ell_2$ and therefore is isomorphic to $\ell_2$.
The same holds for
$$\overline{\rm span}[e_i, i \in \cup_n I_{2n}].$$
By unconditionality of $(e_n)$, it follows that $X$ is isomorphic to $\ell_2$. \end{proof}

\

For the next two examples we shall make use of several properties of Tsirelson's space $T$, its dual or its $2$-convexification $T^{(2)}$; all may be found in \cite{CS}. We shall also use the result from \cite{FeR2} stating that $T$ and $T^{(2)}$ are tight. 

Recall that two bases $(e_n)$ and $(f_n)$ are equivalent when the map defined by $T(e_n)=f_n$
 for all
 $n$ extends to a linear isomorphism of the closed linear spans of $(e_n)$ and $(f_n)$. A basis is subsymmetric if it is unconditional and equivalent to all its subsequences, and symmetric when it is equivalent to all its permutations.

\begin{lemme}\label{sub} Let $X$ be a space with a subsymmetric basis $(e_n)$. Then ${\rm Is}$ is comeager.
\end{lemme}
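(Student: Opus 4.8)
The plan is to invoke Theorem \ref{3.1}, which already tells us that ${\rm Is}$ is either meager or comeager in $b(X)^2$; it therefore suffices to prove that ${\rm Is}$ is \emph{non}-meager. I would do this by producing a comeager set $B\subseteq b(X)$ with $\overline{\rm span}\ z\simeq X$ for every $z\in B$. Indeed, for such a $B$ one has $B\times B\subseteq{\rm Is}$, and since the complement of $B\times B$ is $(B^{c}\times P)\cup(P\times B^{c})$ with both pieces meager by the Kuratowski--Ulam theorem, $B\times B$ is comeager and hence so is ${\rm Is}$.

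To build $B$, I first record the two consequences of subsymmetry that I need. By definition $(e_n)$ is unconditional and uniformly equivalent to all of its subsequences, so $\overline{\rm span}[e_n,\ n\in D]\simeq X$ for \emph{every} infinite $D\subseteq\N$, with a constant independent of $D$; and every block subspace is complemented in $X$ by unconditionality. Now I apply Proposition~2.5 with the successive singletons $\tilde{x}_n=(e_n)$: the set $B$ of those $z\in b(X)$ that pass through infinitely many $\tilde{x}_n$ is comeager. If $z\in B$, then infinitely many basis vectors occur among the terms of $z$, say $\{e_d : d\in D\}$ with $D$ infinite; by unconditionality of the block basis $z$ the projection onto these terms is bounded, so $\overline{\rm span}[e_d,\ d\in D]\simeq X$ is complemented in $Y:=\overline{\rm span}\ z$, while $Y$ is complemented in $X$. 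Note that for generic $z$ the block sequence $z$ itself need \emph{not} be equivalent to $(e_n)$, so one cannot argue at the level of bases; the statement $Y\simeq X$ must be proved at the level of spaces.

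The heart of the matter, and the step I expect to be the main obstacle, is to upgrade this mutual complementation to an honest isomorphism $Y\simeq X$, since Schroeder--Bernstein fails for general Banach spaces. Here subsymmetry supplies the self-similarity needed for a Pe\l czy\'nski-type decomposition. Writing $Y\simeq X\oplus F$ with $F$ a block subspace, it suffices to establish the absorption property $X\simeq X\oplus F$. For this I would partition $\N=\bigsqcup_m D_m$ into infinitely many infinite sets; subsymmetry yields an unconditional decomposition $X\simeq\bigoplus_m X_m$ with each $X_m=\overline{\rm span}[e_n,\ n\in D_m]\simeq X$, and this decomposition is \emph{shift-invariant}, because $\bigoplus_{m\geq k}X_m=\overline{\rm span}[e_n,\ n\in\bigsqcup_{m\geq k}D_m]\simeq X$ for every $k$ (the index set stays infinite). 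Writing $X\simeq F\oplus R$ and splitting each $X_m\simeq F_m\oplus R_m$, a regrouping of the unconditional decomposition gives $X\simeq\mathcal F\oplus\mathcal R$ with $\mathcal F=\bigoplus_m F_m$, $\mathcal R=\bigoplus_m R_m$; shift-invariance gives $\mathcal F\simeq F\oplus\mathcal F$, whence
$$X\oplus F\simeq\mathcal F\oplus\mathcal R\oplus F\simeq(\mathcal F\oplus F)\oplus\mathcal R\simeq\mathcal F\oplus\mathcal R\simeq X.$$
Thus $Y\simeq X\oplus F\simeq X$ for every $z\in B$, and the argument is complete. The delicate points to verify are that the reindexing used for $\bigoplus_{m\geq 2}F_m\simeq\mathcal F$ genuinely carries the $F$-copies onto one another — this is exactly where the \emph{uniform} equivalence of all subsequences is used — and the uniform boundedness of the regrouping projections.
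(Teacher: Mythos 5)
Your reduction to non-meagerness via Theorem \ref{3.1} and Kuratowski--Ulam is legitimate, but the proof that $\overline{\rm span}\ z\simeq X$ for every $z$ in your comeager set $B$ has a genuine gap at the step ``every block subspace is complemented in $X$ by unconditionality.'' Unconditionality of $(e_n)$ gives bounded coordinate projections onto spans of \emph{subsequences of the basis}; it does not make the span of a \emph{block basis} complemented in $X$. Already in $L_p$ ($p>2$) with its unconditional Haar basis there are block subspaces (uncomplemented Hilbertian subspaces) that are not complemented, and nothing in subsymmetry obviously repairs this. This failure is fatal to your Pe\l czy\'nski scheme: you do obtain a complemented copy of $X$ inside $Y=\overline{\rm span}\ z$ (the projection of $Y$ onto the terms of $z$ that are basis vectors is bounded because $z$ is an unconditional basic sequence, and the span of a subsequence of a subsymmetric basis is isomorphic to $X$), but the decomposition argument also needs $X\simeq F\oplus R$, i.e.\ it needs $F$ --- the complement of that copy of $X$ inside $Y$ --- to be complemented in $X$, and this is exactly the one-sided Schroeder--Bernstein situation that fails in general. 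More fundamentally, your plan is to show that the generic block subspace is isomorphic to $X$ itself; this is strictly stronger than what is needed, is not established by your argument, and is unclear (quite possibly false) for general subsymmetric bases.

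The paper's proof avoids identifying the generic class altogether. It takes the comeager set $A$ of block sequences $(z_n)$ in which every possible ``distribution'' of a vector of $Q_0$ occurs infinitely often, and shows $A$ lies in a single ${\rm Is}$-class: given $y,z\in A$, one matches distributions term by term, so that subsymmetry makes $y$ equivalent to a subsequence of $z$ and $z$ equivalent to a subsequence of $y$, and Mityagin's Schroeder--Bernstein theorem for unconditional basic sequences then yields $\overline{\rm span}\ y\simeq\overline{\rm span}\ z$. No complementation enters. To salvage your approach you would have to replace the decomposition-method step by an argument at the level of basic sequences --- which is essentially what the paper does.
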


\begin{proof}
Assume $(e_n)$ is subsymmetric.   If $x=\sum_{i \in {\rm supp}(x)} x_i e_i$ and 
$y=\sum_{j \in {\rm supp}(y)} y_j e_j$ are finitely supported, we say that they have same distribution if
there is an order preserving  bijection $\sigma$ between ${\rm supp}(x)$ and ${\rm supp}(y)$ such that $y_{\sigma(i)}=x_i$ for all $i$.
Note that for vectors of $Q_0$, there are only countably many possible distributions, which we denote by $\{d_k, k \geq 1\}$. 
Let
 $$A=\big\{(z_n)_n \in b(X) \del \forall k \geq 1,
\ z_n{\rm \ has\ distribution\ } d_k {\rm\ for\ infinitely\ many\ }n{\rm 's}\ \big\}.$$ We claim that $A$ is comeager and contained in a ${\rm Is}$-class in $b(X)$. Then it follows immediately that ${\rm Is}$ is comeager.

To prove the second part of the claim, note that if $y,z$ belong to $A$, then one easily constructs by induction a subsequence $(z_{n_i})_i$ of $z$ such that each $z_{n_i}$ has the same distribution as $y_i$. By subsymmetry of the basis, it follows that the subsequence $(z_{n_i})_i$  is equivalent to $y$. Likewise $y$ is equivalent to a subsequence of $z$. Since both are unconditional, it follows by the Schroeder-Bernstein property for unconditional sequences (first proved by Mityagin    \cite{M}) that $z$ is equivalent to a permutation of $y$ and therefore that $\overline{\rm span}\ y \simeq
\overline{\rm span}\ z$. So $A$ is contained in a single ${\rm Is}$-class.

Finally to prove the first part of the claim, let $(\tilde{x}_n)_n$ be successive elements of  $b^{<\omega}(X)$, such that each $\tilde{x}_n$ is a sequence of $n$ vectors of respective distributions $d_1,d_2,\ldots,d_n$. 
Let $C$ be the comeager set of all $z$ in $b(X)$ which pass through infinitely many of the $\tilde{x}_n$'s. It is clear that any $z \in C$ contains infinitely many terms of distribution $d_k$ for each $k \geq 1$. Therefore $C \subset A$ and $A$ is comeager.
\end{proof}

\begin{ex}\label{S} A space without minimal subspaces, although ${\rm Is}$ is comeager. \end{ex}

\begin{proof} Let $X=S(T^{(2)})$, the symmetrization of the $2$-complexification of Tsirelson's space. The canonical basis of $X$ is symmetric, so ${\rm Is}(X)$ is comeager by Lemma \ref{sub}. On the other hand, by \cite{CS} Notes and Remarks 7) a) p.118, every subspace $Y$ of $X$ contains an isomorphic copy of a subspace of $T^{(2)}$. Since $T^{(2)}$ is tight, it contains no minimal subspace, which implies that $Y$ cannot be minimal.
\end{proof}

We note at this point that the spaces for which ${\rm Is}$ is comeager are those for which the existence of  a
continuum of non-isomorphic subspaces remains to be shown - and is still open in some simple cases, such as $\ell_p$ for 
$2<p<+\infty$.

Conversely to Example \ref{S}, the relation ${\rm Is}$ may be meager even for minimal spaces:

\begin{ex}\label{T} A space  which is minimal although ${\rm Is}$ is meager. \end{ex}

\begin{proof} We shall consider the space $T^*$, which is minimal by \cite{CS}, and prove that ${\rm Is}$ is meager
on $b(T^*)^2$. First we denote by $(e_n)$ the canonical basis of $T$ and  by $\simeq$ the relation on $2^{\omega}$ induced by isomorphism on $T$, i.e.
$$u \simeq v \Leftrightarrow \overline{\rm span}\ [e_i, i \in u] \simeq
\overline{\rm span}\ [e_i, i \in v].$$
We observe that  any $\simeq$-class on $2^{\omega}$ is meager.
Indeed if $u_0 \in 2^{\omega}$ and if $Y_0$ denotes $\overline{\rm span} [e_n, n \in u_0]$, then
$$\{u \in 2^{\omega}: u_0 \simeq u\} \subseteq 
\{u \in 2^{\omega}: Y_0 \subseteq \overline{\rm span} [e_n, n \in u]\}=E_{Y_0},$$
which is meager because $T$ is tight.

On the other hand, since the basis of $T$ is unconditional and $T$ is reflexive, we note that $\simeq$ is also the relation on $2^{\omega}$ induced by isomorphism on $T^*$, i.e.
$$u \simeq v \Leftrightarrow \overline{\rm span}\ [e_n^*, n \in u] \simeq
\overline{\rm span}\ [e_n^*, n \in v],$$
where $(e_n^*)$ is the canonical basis of $T^*$. So we may relate $(2^\omega,\simeq)$ to $(b(T^*),{\rm Is})$ as follows.
Let $\phi: b(T^*) \rightarrow 2^\omega$ be defined by
$$\phi((z_n)_n)=\cup_n \min({\rm supp}\ z_n).$$
By the properties of $T^*$ we have that the sequences $(z_n)$ and
$(e^*_{\min({\rm supp}\ z_n)})$ are equivalent and in particular span isomorphic subspaces
of $T^*$. In other words the spaces
$\overline{\rm span}\ [e_i^*, i \in \phi(z)]$ and $\overline{\rm span}\ z$ are isomorphic
for each $z \in b(T^*)$, and therefore 
$$(z,y) \in {\rm Is} \Leftrightarrow \overline{\rm span}\ z \simeq \overline{\rm span}\ y
\Leftrightarrow \phi(z) \simeq \phi(y) 
$$
for $z,y \in b(T^*)$.
Now if $A$ is any ${\rm Is}$-class on $b(T^*)$, then $\phi(A)$ is contained in a single $\simeq$-class on $2^{\omega}$, and therefore is meager. The map $\phi$ is continuous, and for any basic open set $N_{z_0,\ldots,z_n}$ of $b(X)$,
$\overline{\phi(N_{z_0,\ldots,z_n})}$ is a basic open set of $2^{\omega}$. It follows easily that
$A=\phi^{-1}(\phi(A))$ is meager. 
So all ${\rm Is}$-classes are meager in $b(T^*)$, and Kuratowski-Ulam theorem implies that
${\rm Is}$ is meager in $b(T^*)^2$. \end{proof}

Finally if we note 
$${\rm Emb}^*=\big\{ (y,z) \in b(X)^2:\ \overline{\rm span}\ z \embed \overline{\rm span}\ y\big\},$$
then of course $${\rm Be}={\rm Emb} \cap {\rm Emb}^*.$$
Since ${\rm Emb}^*$ is homeomorphic to ${\rm Emb}$, it follows that
 ${\rm Be}$ is comeager if and only if ${\rm Emb}$ is comeager. The above example shows however that ${\rm Is}$ can be meager while ${\rm Be}$ is comeager - and even equal to $b(X)^2$.

\section{Some open questions}
This work is motivated by the crucial problem of estimating the complexity of the linear isomorphism relation $\simeq$ on the set $SB(X)$ of subspaces of a Banach space $X$.
Gowers and Komorowski - Tomczak-Jaegermann' solution to Banach homogeneous space problem \cite{Go1} asserts that if $X \not\simeq \ell_2$, then $SB(X)$ contains at least two classes, but it is not known if, for example, it necessarily contains infinitely many classes.

Following \cite{FeR1}, we say that a separable Banach space $X$ is {\em ergodic} if $E_0$ Borel reduces to $\simeq$ on $SB(X)$, i.e. if there is $$f:2^{\omega} \rightarrow SB(X)$$ a Borel map
(when $SB(X)$ is equipped with the natural Effros Borel structure, see \cite{Bos}), such that
$$u E_0 v \Leftrightarrow f(u) \simeq f(v).$$

It is shown in \cite{FeR2} Th. 7.3 that every tight space has a strong $E_0$-antichain and thus is in particular ergodic. it is interesting to notice that spaces which are "close to $\ell_2$" but not $\ell_2$ are ergodic: indeed \cite{An} weak Hilbert spaces and asymptotically hilbertian spaces non-isomorphic to $\ell_2$ are ergodic.
We recall that by Kuratowski - Mycielski, any space $X$ such that ${\rm Is}$ is meager in $b(X)^2$ contains a continuum of non-isomorphic subspaces. Actually a similar argument shows that if ${\rm Is}$ is meager then $E_0$ reduces to ${\rm Is}$, and thefore $X$ is ergodic (Proposition 7 of \cite{FeR1}).

The main conjecture, already stated in \cite{FeR1}, is therefore:

\begin{prob}\label{4.1} Let $X$ be a separable Banach space which is not isomorphic to $\ell_2$. Is $X$ ergodic? \end{prob}

A slightly weaker form of this conjecture would be to show that any $X \not\simeq \ell_2$ contains a continuum of non-isomorphic subspaces. This is not known for $\ell_p$, $2<p<\infty$, although it is known that $\ell_p$ contains uncountably many non-isomorphic subspaces. And since $b(\ell_p)$ consists of a single isomorphism class, one has to deal with the whole set $SB(\ell_p)$ of closed linear subspaces of $\ell_p$.

By a theorem of Silver \cite{Si}, every Borel - or more generally coanalytic - equivalence relation on a Polish space with uncountably many classes actually has a continuum of classes.
To show this for $\simeq$ on $SB(\ell_p)$, it would therefore be sufficient to answer positively the following question:

\begin{prob}\label{4.2} Is the isomorphism relation $\simeq$ a Borel subset of $SB(\ell_p)^2$
($1 \leq p <+\infty, p \neq 2$)? \end{prob}

Note that it is analytic in $SB(X)^2$ for any Banach space $X$, and it is known to be non-Borel if e.g. $X={\mathcal C}(\Delta)$ (\cite{Bos}). We conjecture - unfortunately - a negative answer to Problem \ref{4.2}.

\

Finally, in Example \ref{T}, it is known that $T^*$ it is not "block-minimal", meaning that it is not true that it embeds as a block-subspace of all its block-subspaces. So  arguably the minimality of $T^*$ does not have much to do with the structure of the relation of isomorphism between block-subspaces of $T^*$. In this direction, the following remains open:

\begin{prob} Find a space $X$ which embeds as a block-subspace of all its block-subspaces, but such that ${\rm Is}$ is meager. \end{prob}

\end{document}